\newtheorem{theorem}{Theorem}
\newtheorem{corollary}{Corollary}
\newtheorem{lemma}{Lemma}
\newtheorem{conjecture}{Conjecture}
\newtheorem{proposition}{Proposition}
\newcommand{\R}{\mathbb{R}}
\newcommand{\eps}{\varepsilon}
\newcommand{\la}{\langle}
\newcommand{\ra}{\rangle}
\newcommand{\w}{\widehat{w}}
\newcommand{\inte}{\textrm{int} \,}
\newcommand{\K}{\mathcal{K}}
\DeclareMathOperator{\conv}{conv}
\author{Gergely Ambrus}
\title{A generalization of Bang's lemma}
\thanks{Research was partially supported by Hungarian National Research grant no. NKFIH KKP-133819 and by the Ministry of Innovation and
Technology of Hungary from the National Research, Development
and Innovation Fund, project no. TKP2021-NVA-09.
}
\keywords{Bang's lemma, Tarski's plank problem, Kadets' theorem, Translative coverings.}
\subjclass[2020]{52A40, 52C15, 52C17, 46C05}
\date{\today}
\begin{document}

\maketitle

\begin{abstract}
We prove a common extension of Bang's and Kadets' lemmas for contact pairs, in the spirit of the Colourful Carathéodory Theorem. We also formulate a generalized version of the affine plank problem and prove it under special assumptions. In particular, we obtain a generalization of Kadets' theorem.  Finally, we give applications to problems regarding translative and homothetic coverings.
\end{abstract}

\section{Plank problems}
In 1950, Bang~\cite{B50, B51} proved the plank problem of Tarski~\cite{T32}: he showed that if a convex body $K \subset \R^d$ is covered by a finite
number of planks, then the sum of their widths is not less than the minimal
width of $K$. Here a {\em plank}  $P$ is the closed region of $\R^d$ between two parallel hyperplanes, whose distance apart is the {\em width} of $P$, denoted by $w(P)$.  Let $\K^d$ stand for the family of convex bodies in $\R^d$. Given a convex body $K \in \K^d$ and a direction $u \in \R^d \setminus \{0\}$, the {\em width of $K$ in direction $u$}, denoted by $w_u(K)$, is the width of the smallest plank containing $K$ whose bounding hyperplanes are orthogonal to $u$.
The {\em minimal width of $K$} is $w(K) = \min_u w_u(K)$.

In the same article, Bang suggested an affine invariant generalization of the problem. Given a convex body $K \subset \R^d$ and a plank $P \subset \R^d$, he defined the {\em width of $P$ relative to $K$} as
\begin{equation}\label{relwidth}
w_K(P) = \frac{w(P)}{w_u(K)}
\end{equation}
where $u\in\R^d\setminus\{0\}$ is normal to a boundary hyperplane of $P$.
\begin{conjecture}[The affine plank problem, Bang~\cite{B51}]\label{conj_affine}
  Assume that the planks $P_1, \ldots, P_n$ cover the convex body $K\in \K^d$. Then $\sum_{i=1}^n w_K(P_i) \geq 1.$
\end{conjecture}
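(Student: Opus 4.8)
The plan is to mimic Bang's original argument for the Euclidean plank problem, replacing his sign-selection lemma by an affine-sensitive analogue. First I would argue by contradiction: suppose $\sum_{i=1}^n w_K(P_i) < 1$, and write each plank as
\[
P_i = \{x \in \R^d : |\langle x, v_i\rangle - c_i| \le w(P_i)/2\},
\]
with $v_i$ a unit normal to its boundary hyperplanes. The relative width $w_K(P_i) = w(P_i)/w_{v_i}(K)$ rescales the Euclidean width by the width of $K$ in direction $v_i$, so the natural move is to record, for each plank, not merely the direction $v_i$ but a \emph{contact pair}: a pair of boundary points of $K$ at which $w_{v_i}(K)$ is attained. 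Folding the relative width into the length of the functional, i.e. setting $u_i = \tfrac{w(P_i)}{w_{v_i}(K)}\, v_i$, the hypothesis becomes $\sum_i \|u_i\|_{K} = \sum_i w_K(P_i) < 1$, where $\|\cdot\|_{K}$ is the direction-dependent gauge supplied by the widths of $K$.

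The heart of the argument is a Bang-type sign-selection statement: from the data $(u_i, c_i)$ one wants to choose signs $\eps_i \in \{-1,+1\}$ so that the point $x^\star = p_0 + \sum_i \eps_i u_i$ lands outside the interior of every $P_i$ while remaining inside $K$. Bang's inductive exchange argument shows, in the Euclidean setting, that such signs exist with $|\langle x^\star, u_i\rangle - c_i| \ge \|u_i\|^2$ for each $i$; the affine version would need the analogous separation measured against the direction-dependent normalization $w_{v_i}(K)$. This is precisely where the contact-pair formulation and the colourful-Carathéodory flavour enter: the contact points act as geometric certificates that keep the constructed point $x^\star$ feasible for $K$, while the sign choices push it past each plank, yielding a point of $K$ covered by no $P_i$ and hence the desired contradiction.

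The main obstacle is that Bang's exchange argument lives intrinsically in an inner-product space, where all directions share a single quadratic gauge, whereas the affine normalization $w_{v_i}(K)$ assigns a different scale to each direction, and these scales need not assemble into one quadratic form. There is no affine map that simultaneously normalizes $w_{v_i}(K)$ across all directions (John's ellipsoid achieves this only up to a factor of $\sqrt{d}$), so the orthogonality that drives Bang's induction is lost. I therefore expect the argument to go through cleanly only under additional structural hypotheses — for instance, when the contact pairs are compatible in a suitable sense, or when $K$ is centrally symmetric so that the gauge is a genuine norm and the generalized Bang lemma applies directly — which is consistent with the full conjecture being accessible here only under special assumptions rather than in complete generality.
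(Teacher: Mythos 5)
There is a fundamental issue to flag before assessing the details: the statement you were asked about is Conjecture~\ref{conj_affine}, not a theorem. The paper does not prove it --- it explicitly records that the affine plank problem is \emph{open} for general convex bodies, with only special cases known (Ball's theorem for centrally symmetric $K$, two or three planks in the plane, planks partitionable into two parallel subfamilies, and the cases covered by Theorem~\ref{thm_symm} of this paper). So no complete proof of this statement exists in the literature, and your proposal does not supply one either.

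The concrete gap in your argument is the step you describe as ``the heart of the argument'': the affine Bang-type sign-selection statement is asserted as what one \emph{would need}, but it is never proved, and in fact no such lemma is known for a general (non-symmetric) body $K$. Your own diagnosis of why is essentially correct --- Bang's exchange argument maximizes a quadratic form $\bigl|\sum_i \eps_i w_i u_i\bigr|^2$ and exploits a single inner product shared by all directions, whereas the normalizations $w_{v_i}(K)$ assign incompatible scales to different directions. Ball's proof for symmetric $K$ circumvents this by proving a Banach-space version of Bang's lemma in the gauge norm of $K$; for non-symmetric $K$ the gauge is not a norm and that route closes. There is also a second unaddressed difficulty you gloss over: even granting the separation inequalities, one must certify that the constructed point $x^\star$ lies in $K$, which in Bang's and Ball's arguments requires additional work (and in this paper's Theorem~\ref{thm_symm} is exactly what the contact-pair hypothesis and the containment $u \in (1+\eps)\lambda B - o$ deliver). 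Your closing paragraph, where you retreat to ``additional structural hypotheses'' such as compatible contact pairs or central symmetry, is an accurate description of the state of the art --- it matches the compatibility condition \eqref{cond_symm} under which Theorem~\ref{thm_symm} resolves the more general Conjecture~\ref{conj1} --- but it is also an admission that what you have written is a research plan, not a proof of the conjecture.
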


The statement was proved for symmetric $K$'s by Ball~\cite{B91}, but is still open for general convex bodies apart from the following special cases: only two planks in the plane \cite{B54, M58, A68}, at most three planks in the plane \cite{H93}, or when the planks can be partitioned to two parallel subfamilies~\cite{G88,AKP19}.

One of the main ingredients of Bang's proof of the plank problem is the following statement, which has been polished to its present form by Fenchel~\cite{F51} and Ball~\cite{B01}:
\begin{lemma}[Bang's Lemma]\label{bang}
Let $(u_i)_1^n$ be a sequence of unit vectors in $\R^d$ and $(w_i)_1^n$ a
sequence of positive numbers. Then for any sequence $(m_i)_1^n$ of reals, there
exists a point $u$ of the form
\[
u = \sum \eps_i u_i w_i
\]
with $\eps_i = \pm 1$ for $i \in [n]$, so that
\[
|\la u, u_k \ra - m_k | \geq w_k
\]
holds for each $k$.
\end{lemma}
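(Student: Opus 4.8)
The plan is to run a global-to-local extremal argument over the finite set of sign patterns. Writing $u(\eps) = \sum_{i} \eps_i w_i u_i$ for $\eps = (\eps_1,\dots,\eps_n) \in \{-1,+1\}^n$, the idea is to select the sign vector maximizing a carefully chosen quadratic objective and then to exploit the fact that flipping any single coordinate cannot increase its value. Since we are optimizing over a finite set, attainment of the maximum is automatic, so the only real content is in picking the objective and reading off the right consequence of maximality.

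Concretely, I would introduce
\[
  F(\eps) = |u(\eps)|^2 - 2\sum_{i=1}^n \eps_i w_i m_i
\]
and fix an $\eps \in \{-1,+1\}^n$ at which $F$ attains its maximum. The point of subtracting the linear term $2\sum_i \eps_i w_i m_i$ is precisely to make the $m_k$'s enter the local comparison below with the correct sign; without it one would only control $|\la u_k, u(\eps)\ra|$.

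Next, fix an index $k$ and let $\eps'$ be the sign vector obtained from $\eps$ by replacing $\eps_k$ with $-\eps_k$. Expanding $|u(\eps)|^2 = \sum_{i,j}\eps_i \eps_j w_i w_j \la u_i, u_j\ra$, only the terms containing the $k$-th index change. Separating the diagonal contribution $\eps_k^2 w_k^2 = w_k^2$ from the cross terms and using $|u_k| = 1$, a direct computation yields
\[
  F(\eps) - F(\eps') = 4 w_k\, \eps_k\big(\la u_k, u(\eps)\ra - m_k\big) - 4 w_k^2 .
\]
Maximality of $\eps$ forces $F(\eps) - F(\eps') \geq 0$, and dividing by $4 w_k > 0$ gives $\eps_k\big(\la u_k, u(\eps)\ra - m_k\big) \geq w_k$. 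Since $\eps_k = \pm 1$, the left-hand side is at most $|\la u_k, u(\eps)\ra - m_k|$, so the required inequality $|\la u_k, u(\eps)\ra - m_k| \geq w_k$ holds for every $k$, and $u = u(\eps)$ is the sought point.

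I expect the only genuinely delicate part to be identifying the right objective $F$: once the quadratic-plus-linear functional is in hand, everything reduces to a routine single-coordinate variation, and the normalization $|u_i| = 1$ is exactly what converts the diagonal term into the threshold $w_k$. An essentially equivalent route, available when the $m_i$ arise as $\la u_i, c\ra$ for a common point $c$, is to maximize $|u(\eps) - c|^2$; in the present generality, however, the scalar linear term keeps each constraint cleanly separated, so I would retain the formulation above.
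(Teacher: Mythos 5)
Your proof is correct and is essentially the same argument the paper uses: the paper obtains Bang's lemma as the special case $U_i=\{-w_iu_i,w_iu_i\}$ of Theorem~\ref{thm_main}, whose proof maximizes $\sum_{i\neq j}\la u_i,v_j\ra-\sum_i\la x_i,v_i\ra-\sum_j\la u_j,y_j\ra$ over the finite choice set and then performs exactly your single-coordinate variation. In this special case that objective differs from your $F(\eps)=|u(\eps)|^2-2\sum_i\eps_iw_im_i$ only by the additive constant $\sum_i w_i^2$, so the two maximizers coincide and the sign-flip computation is identical.
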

\noindent
Above and later on, $[n] = \{1, \ldots, n\}.$

Bang's lemma has found numerous applications in the past decades. In particular, it is a crucial ingredient of Ball's proof for the symmetric case of the affine plank problem~\cite{B91}, his lower bound on the density of sphere packings~\cite{B92} as well as Nazarov's solution of the coefficient problem~\cite{N97}.

In 2005, Kadets~\cite{K05} generalized the original plank problem to coverings with arbitrary convex bodies in $\R^d$. He proved that if a family of convex bodies $K_1, \ldots, K_n \in \K^d$ covers $K \in \K^d$, then $\sum_{i=1}^n r(K_i) \geq r(K)$, where $r(K)$ denotes the inradius of $K$. The crux of his argument boils down to the following generalization of Theorem~\ref{bang}. Below, $S^{d-1}$ denotes the $d$-dimensional unit sphere.

\begin{lemma}[Kadets' Lemma]\label{kadets}
Assume that $U_1, \ldots, U_n \subset S^{d-1}$ are finite sets of unit vectors in $\R^d$ so that $0 \in \conv U_i$ for each $i$. Let $r_1, \ldots, r_n >0$ be positive numbers. Then for every set of points $o_1, \ldots, o_n \in \R^d$ there exist $u_i \in U_i$, $i = 1, \ldots, n$ so that setting $u = \sum_{i=1}^n r_i u_i$, \[
\la u - o_k, u_k \ra \geq r_k
\]
holds for every $k$.
\end{lemma}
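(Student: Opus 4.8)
The plan is to adapt the extremal ``switching'' proof of Bang's Lemma (Lemma~\ref{bang}) polished by Fenchel and Ball, in which one maximizes a suitable quadratic functional over all sign patterns and then shows that optimality alone forces the desired inequalities. Two features are new here: the choices range over the finite sets $U_k$ rather than over the two signs $\pm1$, and the hypothesis $0\in\conv U_k$ must be brought into play to make the argument close.

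Concretely, I would work over the finite product of selections $\mathbf u=(u_1,\dots,u_n)$ with $u_i\in U_i$, set $u(\mathbf u)=\sum_{i=1}^n r_i u_i$, and consider the functional
\[
F(\mathbf u)=\norm{u(\mathbf u)}^2-2\sum_{i=1}^n r_i\,\la o_i,u_i\ra .
\]
Since there are only finitely many selections, $F$ attains its maximum; fix a maximizer $\mathbf u$ and write $u=u(\mathbf u)$. Now fix an index $k$ and replace $u_k$ by an arbitrary $v_k\in U_k$, leaving the other coordinates unchanged; call the new selection $\mathbf u'$ and put $\delta=r_k(v_k-u_k)$, so that $u(\mathbf u')=u+\delta$. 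A direct expansion yields
\[
F(\mathbf u')-F(\mathbf u)=2 r_k\,\la u-o_k,\;v_k-u_k\ra+\norm{\delta}^2 .
\]
In Bang's Lemma the analogue of $\norm{\delta}^2$ equals the constant $4w_k^2$ after a sign flip, which is exactly what manufactures the width on the right-hand side; here $\norm{\delta}^2$ varies with the choice of $v_k$, and controlling this sign-indefinite term is the crux of the matter.

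The device that resolves this is to \emph{average} rather than to optimize in the final step. Using $0\in\conv U_k$, write $0=\sum_{v\in U_k}\lambda_{k,v}\,v$ as a convex combination and let $v_k$ be random, taking the value $v\in U_k$ with probability $\lambda_{k,v}$, so that the mean of $v_k$ is $0$. Optimality of $\mathbf u$ gives $F(\mathbf u')-F(\mathbf u)\le 0$ for every outcome, hence also in expectation. Because all vectors are unit vectors and $v_k$ has mean $0$, one gets $\mathbb E\,\la u-o_k,v_k-u_k\ra=-\la u-o_k,u_k\ra$ together with the key identity $\mathbb E\,\norm{v_k-u_k}^2=\mathbb E\,\norm{v_k}^2-2\la\mathbb E v_k,u_k\ra+\norm{u_k}^2=1-0+1=2$. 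Taking expectations in the displayed difference therefore gives
\[
0\ge -2 r_k\,\la u-o_k,u_k\ra+2 r_k^2 ,
\]
i.e.\ $\la u-o_k,u_k\ra\ge r_k$; as $k$ was arbitrary, the maximizing selection witnesses the lemma. The main obstacle is exactly the passage from the two-point sign space of Bang's Lemma to the sets $U_k$: the extremality step no longer suffices on its own owing to the term $\norm{\delta}^2$, and the hypothesis $0\in\conv U_k$ is precisely what lets the averaging step convert that term into the clean constant $2$.
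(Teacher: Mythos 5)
Your proof is correct and is essentially the same argument the paper uses for its more general Theorem~\ref{thm_main}: maximize a quadratic functional over the finite set of selections, then average the resulting optimality inequalities against the convex combination witnessing $0 \in \conv U_k$. The only cosmetic difference is that the paper's functional omits the diagonal terms $i=j$ (making it multilinear in the selections, which is what lets the generalization handle non-unit vectors), whereas your $\| u \|^2$ includes them; since all vectors here are unit, the two functionals differ by an additive constant and the arguments coincide.
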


We note that the planar case of Kadets' theorem was also proved much earlier by Ohmann~\cite{O53}, and later independently by Bezdek~\cite{B07}.
Prior to that, Bezdek and Bezdek \cite{BB95} solved Conway's potato problem and showed that if $K$ is successively sliced by $n-1$ hyperplane cuts, dividing just one piece by each cut, then one of the remaining pieces must have inradius at least $\frac 1 n r(K)$. In a follow-up article \cite{BB96}, they extended their result to {\em $K$-inradii} instead of inradii: given a convex body $K \in \K^d$ and a convex set $L \subset \R^d$, the $K$-inradius of $L$ is defined as
\begin{equation}
r_K(L) = \sup \{ \lambda \geq 0: \ \lambda K + x \subset L \textrm{ for some } x \in \R^d . \}
\end{equation}
Note that for a plank $P \subset \R^d$,
\begin{equation}\label{wrk}
  w_K(P) = r_K(P).
\end{equation}
The connection to plank problems is provided by Alexander~\cite{A68}, who proved that for $K \in \K^d$, the sum of the $K$-inradii of $n$ planks covering $K$ is guaranteed to be at least 1 if and only if for an arbitrary set of $n-1$ hyperplanes, there exists a convex body $L \subset K$ with $r_K(L) \geq \frac 1 n$ not cut by any of these hyperplanes.

Along this direction, Akopyan and Karasev~\cite{AK12} proved analogues of Kadets' result for $K$-inradii: among other results, they showed that if $K_1, \ldots, K_n$ form an {\em inductive partition} of  $K \in \K^d$, then $\sum r_K(K_i) \geq 1$ holds, moreover, the same statement is true in the plane for arbitrary convex partitions. Balitskiy~\cite{B21} generalized the theorems of Bang and Kadets to multi-planks.

The goal of the present paper is to generalize Lemmas~\ref{bang} and \ref{kadets} in the spirit of Bárány's Colourful Carathéodory Theorem~\cite{B82}. The resulting statement may be applied to general covering problems involving $K$-inradii, and in particular, to translative covering problems.

Let $K, L \subset \R^d$ be convex bodies. It is a well-known fact that if $K'$ is a maximal homothetic copy of $K$ inscribed in $L$, then there exists a set of points $u_1, \ldots, u_n \in \R^d$ with corresponding normal directions $v_1, \ldots, v_n \in \R^d\setminus \{ 0 \}$ such that $u_i$ is a common boundary point of $K'$ and $L$ with corresponding (common) outer normal vector $v_i$ for every~$i$, moreover, $0 \in \conv \{ v_1, \ldots, v_n \}$.  The pairs $(u_i, v_i)$ are called {\em contact pairs} of $K'$ and $L$. A set of contact pairs is called {\em complete} if $0 \in \conv \{ v_1, \ldots, v_n \}$. Carathéodory's theorem implies that in the above setting, there always exists a complete set of contact pairs of cardinality at most $d+1$.

We are going to generalize Bang's lemma to sets of  contact pairs. The forthcoming arguments will use the following setup. For vectors $u, v \in \R^d$, we define $w \in \R^d \times \R^d$ as $w = (u,v)$. For any such vector $w =(u,v)$, let $\w = (v,u)$. Here comes the main result of the paper.
\begin{theorem}\label{thm_main}
Assume that $W_1, \ldots, W_n \subset \R^d \times \R^d$ are finite sets such that $(0,0)  \in \conv W_i$ for each $i \in [n]$. For any set of vectors $z_1, \ldots, z_n \in \R^d \times \R^d$, there exist $w_i \in W_i$, $i \in [n]$ so that by setting $w = \sum_{i=1}^n w_i$,
\begin{equation}\label{thm1eq}
\la w - z_k, \w_k \ra \geq \la w_k, \w_k \ra
\end{equation}
holds for each $k$.
\end{theorem}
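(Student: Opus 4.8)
The plan is to recast the $n$ target inequalities in terms of a single symmetric bilinear form and then run an extremal argument on a carefully chosen potential, exactly in the spirit of the proof of the Colourful Carathéodory Theorem. First I would introduce on $\R^d \times \R^d$ the bilinear form $B(w,w') = \la u, v'\ra + \la v, u'\ra$ for $w = (u,v)$ and $w' = (u',v')$; equivalently $B(w,w') = \la w, \widehat{w'}\ra$. Since $B(w,w') = B(w',w)$, this form is symmetric. Writing $w = \sum_i w_i$ and $w^{(k)} = \sum_{i \ne k} w_i = w - w_k$, the desired inequality \eqref{thm1eq} reads $B(w - z_k, w_k) \ge B(w_k, w_k)$, which by bilinearity is equivalent to
\[
B\bigl(w^{(k)} - z_k, \, w_k\bigr) \ge 0 .
\]
Thus it suffices to produce a selection $w_i \in W_i$ for which this reduced inequality holds for every $k$.

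The key step is the choice of potential. I would define, over the finite product set $W_1 \times \cdots \times W_n$,
\[
\Phi(w_1, \ldots, w_n) = \sum_{1 \le i < j \le n} B(w_i, w_j) \; - \; \sum_{k=1}^n B(z_k, w_k),
\]
and fix a maximiser $(w_1, \ldots, w_n)$, which exists because the domain is finite and nonempty (indeed $(0,0)\in\conv W_i$ forces $W_i \ne \emptyset$). The decisive feature of this potential is that it carries no diagonal terms $B(w_k,w_k)$, so $\Phi$ is \emph{affine} in each coordinate $w_k$ separately: replacing $w_k$ by an arbitrary $w_k' \in W_k$ changes $\Phi$ by exactly $B\bigl(w^{(k)} - z_k,\, w_k' - w_k\bigr)$. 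Maximality therefore yields, for every $k$ and every $w_k' \in W_k$, the genuinely linear inequality $B\bigl(w^{(k)} - z_k,\, w_k'\bigr) \le B\bigl(w^{(k)} - z_k,\, w_k\bigr)$; that is, $w_k$ maximises the linear functional $w' \mapsto B\bigl(w^{(k)} - z_k,\, w'\bigr)$ over the set $W_k$.

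Finally I would feed in the hypothesis $(0,0) \in \conv W_k$. Writing $(0,0) = \sum_{w' \in W_k} \mu_{w'} \, w'$ as a convex combination and pairing against $w^{(k)} - z_k$ gives
\[
0 = B\bigl(w^{(k)} - z_k, (0,0)\bigr) = \sum_{w' \in W_k} \mu_{w'} \, B\bigl(w^{(k)} - z_k,\, w'\bigr) \le B\bigl(w^{(k)} - z_k,\, w_k\bigr),
\]
where the last step uses that each summand is at most the maximal value $B(w^{(k)} - z_k, w_k)$ and that the weights $\mu_{w'}$ are nonnegative with $\sum_{w'}\mu_{w'} = 1$. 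This is precisely the reduced inequality for index $k$, and since $k$ was arbitrary, the maximiser $(w_1,\ldots,w_n)$ is the required selection.

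I expect the main obstacle to be conceptual rather than computational, namely the choice of potential. The form $B$ has signature $(d,d)$ and is strongly indefinite, so the naive quadratic potential $B(w,w) - 2\sum_k B(z_k,w_k)$ suggested by the sign-flip proofs of Bang's and Kadets' lemmas leaves, upon varying a single coordinate, an uncontrolled term $B(w_k' - w_k,\, w_k' - w_k)$ of indefinite sign, and the extremal argument then fails to close. Recognising the purely off-diagonal interaction energy $\sum_{i<j} B(w_i,w_j)$ as the right potential — so that first-order optimality is exactly linear and can be combined with $0 \in \conv W_k$ — is the crux of the argument; the remaining steps are routine bookkeeping.
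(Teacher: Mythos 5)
Your proposal is correct and is essentially the paper's own proof: your potential $\Phi(w_1,\ldots,w_n)=\sum_{i<j}B(w_i,w_j)-\sum_k B(z_k,w_k)$ expands coordinatewise to exactly the quantity \eqref{quant} maximised in the paper, and the first-order optimality argument combined with the convex combination representing $(0,0)\in\conv W_k$ is identical. The only difference is presentational — packaging everything in the symmetric form $B(w,w')=\la w,\widehat{w'}\ra$ makes the bookkeeping cleaner, but the underlying argument is the same.
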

Theorem~\ref{thm_main} is formulated in the context of contact pairs $(u_i, v_i)$. Setting $v_i = u_i$ and $y_i = x_i$ for every $i$, it takes the following simpler form.
\begin{corollary}\label{cor1}
  Assume that all the finite vector sets $U_1, \ldots, U_n \subset \R^d$ contain the origin in their convex hull. Then for any set of vectors $x_1 , \ldots, x_n \in \R^d$ we may select $u_i \in U_i$ for each $i \in [n]$ so that setting $u = \sum_i u_i$,
  \[
  \la u - x_k, u_k \ra \geq  |u_k|^2
  \]
  holds for every $k$.
\end{corollary}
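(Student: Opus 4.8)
The statement is the special case $v_i=u_i$ of Theorem~\ref{thm_main}: taking $W_i=\{(u,u):u\in U_i\}$ makes $(0,0)\in\conv W_i$ equivalent to $0\in\conv U_i$, and with $z_i=(x_i,x_i)$ every selected $w_i=(u_i,u_i)$ satisfies $\w_i=w_i$, so $w=(u,u)$ with $u=\sum_i u_i$ and \eqref{thm1eq} reads $2\la u-x_k,u_k\ra\ge 2|u_k|^2$. Since I may assume the theorem, this is a one-line formal specialization. What follows is instead a plan for the self-contained extremal argument that drives it, since it exposes the real mechanism and simultaneously reproves this case of the theorem.

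The plan is to run a variational argument over the finite family of selections $\prod_i U_i$, and the decisive move is the choice of potential. I would set, for a selection $(u_1,\dots,u_n)$ with $u_i\in U_i$ and $u=\sum_i u_i$,
\[
\Phi(u_1,\dots,u_n)=|u|^2-\sum_{i=1}^n |u_i|^2-2\sum_{i=1}^n\la x_i,u_i\ra=\sum_{i\ne j}\la u_i,u_j\ra-2\sum_{i=1}^n\la x_i,u_i\ra ,
\]
and choose a selection maximizing $\Phi$, which exists because $\prod_i U_i$ is finite. The key property of this $\Phi$ is that it is affine in each coordinate separately: its $u_k$-dependent part equals $2\la u_k,\sum_{j\ne k}u_j-x_k\ra$, with no $|u_k|^2$ term surviving. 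Consequently, replacing $u_k$ by an arbitrary $u_k'\in U_k$ changes the potential by exactly $2\la u_k-u_k',\sum_{j\ne k}u_j-x_k\ra$, and maximality forces this to be $\ge 0$ for every $u_k'\in U_k$. I would then feed in the hypothesis $0\in\conv U_k$: writing $0=\sum_\ell\lambda_\ell u_k^{(\ell)}$ as a convex combination of points of $U_k$ and averaging the preceding inequalities against the weights $\lambda_\ell$, linearity in the second slot collapses $\sum_\ell\lambda_\ell u_k^{(\ell)}$ to $0$ and yields $\la u_k,\sum_{j\ne k}u_j-x_k\ra\ge 0$. Using $\sum_{j\ne k}u_j=u-u_k$ and rearranging gives $\la u-x_k,u_k\ra\ge|u_k|^2$, as required, uniformly in $k$.

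The only genuine obstacle is the choice of $\Phi$; everything afterward is bookkeeping. The naive candidate $|u|^2-2\sum_i\la x_i,u_i\ra$ is convex rather than affine in each slot, so comparing the maximizer against $0$ leaves an uncontrolled $|u_k|^2$ remainder. In Kadets' Lemma this remainder is harmless because the vectors are unit and it is therefore constant along $U_k$; here the $u_i$ may have arbitrary lengths, so the remainder must be cancelled at the source by subtracting $\sum_i|u_i|^2$. This same correction is precisely what renders the method insensitive to the indefinite diagonal terms $\la w_i,\w_i\ra$ in Theorem~\ref{thm_main}, which is why reading the identical potential through the symmetric pairing $(a,b)\mapsto\la a,\widehat{b}\ra$ proves the general statement as well.
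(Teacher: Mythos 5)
Your proposal is correct and matches the paper: the paper itself obtains Corollary~\ref{cor1} exactly by the one-line specialization $v_i=u_i$, $y_i=x_i$ in Theorem~\ref{thm_main}, and your self-contained potential $\Phi=\sum_{i\ne j}\la u_i,u_j\ra-2\sum_i\la x_i,u_i\ra$ is precisely the paper's maximized quantity \eqref{quant} under that specialization, with the same first-variation and convex-combination steps. Nothing to add.
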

When all the sets $U_i$ consist of unit vectors, we recover Kadets' lemma, while the case $U_i = \{ -u_i, u_i \}$ with $u_i \in \R^d$ corresponds to Bang's lemma.

Theorem~\ref{thm_main} and Corollary~\ref{cor1} lead towards the following generalization of the affine plank problem. We say that the convex sets $C_1, \ldots, C_n \subset \R^d$ {\em permit a translative covering of $K \in \K^d$} if
\[
K \subset \bigcup_{i=1}^n ( C_i + x_i)
\]
for some $x_1, \ldots, x_n \in \R^d$.

\begin{conjecture}\label{conj1}
  Assume that the convex sets $C_1, \ldots, C_n \subset \R^d$ permit a translative covering of the convex body $B \in \K^d$. Then
  \[
  \sum_{i=1}^n r_B(C_i) \geq 1
  \]
  holds.
\end{conjecture}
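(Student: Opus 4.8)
The plan is to argue by contradiction, using Theorem~\ref{thm_main} as the selection engine in exactly the way Kadets' lemma drives Kadets' theorem. Suppose the translates $C_i + x_i$ cover $B$ and yet $\sum_{i=1}^n r_B(C_i) < 1$; I will produce a point of $B$ that escapes every $C_i + x_i$. After translating so that $0 \in \inter B$, write $r_i = r_B(C_i)$ and for each $i$ let $r_i B + t_i \subseteq C_i + x_i$ be a maximal inscribed homothet. By the contact-pair discussion preceding Theorem~\ref{thm_main}, there is a complete set of contact pairs $(b_{ij}, v_{ij})_j$ of $r_i B + t_i$ and $C_i + x_i$, with $b_{ij} \in \partial B$, with $v_{ij}$ the common outer normal, and with $0 \in \conv\{v_{ij}\}_j$. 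Since $v_{ij}$ supports the homothet at $t_i + r_i b_{ij}$, each translate obeys
\[
C_i + x_i \subseteq \{x : \la x - t_i, v_{ij}\ra \leq r_i\, h_B(v_{ij})\},
\]
where $h_B$ is the support function of $B$ and $\la b_{ij}, v_{ij}\ra = h_B(v_{ij})$.

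The point I would test is $q = \sum_{i=1}^n r_i b_{i}$, where $b_i = b_{i,j(i)}$ is the contact direction of a single pair selected from each body. The crucial gain of working with the contact \emph{points} of the homothet, rather than with the normals as in the ball case, is that the membership $q \in \inter B$ becomes automatic: writing $R = \sum_i r_i < 1$, the point $q = R\sum_i (r_i/R) b_i$ is an $R$-multiple of a convex combination of boundary points of $B$, hence $q \in R B \subseteq \inter B$. Thus it suffices to choose the selection $j(i)$ so that $q$ violates one supporting halfspace of each body, that is,
\[
\la q - t_k, v_k\ra \geq r_k\, h_B(v_k) = \la r_k b_k, v_k\ra \qquad (k \in [n]),
\]
since this forces $q \notin C_k + x_k$ for every $k$, contradicting the covering.

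To perform the selection I would feed Theorem~\ref{thm_main} the sets $W_i = \{(r_i b_{ij}, v_{ij})\}_j \subset \R^d \times \R^d$ and the vectors $z_k = (t_k, s_k)$, arranged so that the first coordinate of $w = \sum_i w_i$ is exactly the test point $q$. For the selected pair $w_k = (r_k b_k, v_k)$ the conclusion~\eqref{thm1eq} then reads
\[
\la q - t_k, v_k\ra + \la V - s_k, r_k b_k\ra \geq 2\, r_k\, h_B(v_k),
\]
where $V = \sum_i v_i$ is the second coordinate of $w$. When $B$ is a ball the directions $b_{ij}$ and $v_{ij}$ coincide, the sets $W_i$ collapse to the diagonal form of Corollary~\ref{cor1}, the cross term is controlled, and the display reduces to the clean $\la q - t_k, v_k\ra \geq r_k$; this is precisely the generalization of Kadets' theorem announced in the abstract.

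The main obstacle is twofold, and it is exactly what keeps Conjecture~\ref{conj_affine} open. First, Theorem~\ref{thm_main} demands $(0,0) \in \conv W_i$, which forces the scaled contact points $r_i b_{ij}$ to be balanced by the \emph{same} weights that balance the normals $v_{ij}$; the maximality of the homothet yields only $0 \in \conv\{v_{ij}\}_j$, so this joint balance fails for a general body $B$. Second, the cross term $\la V - s_k, r_k b_k\ra$ cannot be cancelled by a selection-independent choice of $s_k$, since $V$ depends on the selection. Both difficulties vanish for the ball (recovering Kadets) and can be enforced under additional hypotheses, such as central symmetry of $B$, where antipodal contact pairs may be grouped to restore the joint balance, or a partition of the normals into parallel subfamilies in the spirit of \cite{AKP19}. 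I therefore expect to establish Conjecture~\ref{conj1} only under such special assumptions, with the fully general case remaining tied to the affine plank problem.
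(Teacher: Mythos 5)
The statement you were given is Conjecture~\ref{conj1}, and the paper does not prove it either: it is open in general, and the paper establishes it only under the extra hypotheses of Theorem~\ref{thm_symm}, Corollary~\ref{cor_identical} and Proposition~\ref{prop2}. Your analysis is essentially correct and, for the provable cases, coincides with the paper's argument. The scheme you describe --- feed Theorem~\ref{thm_main} the sets $W_i$ of scaled contact pairs of a maximal inscribed homothet $r_iB+t_i$ of $B$ in $C_i+x_i$, take the test point $q=\sum_i r_i b_i$, note that $q\in RB\subset\inter B$ because $R=\sum_i r_i<1$, and use the supporting halfspace at each contact point to expel $q$ from every $C_k+x_k$ --- is precisely the proof of Theorem~\ref{thm_symm}. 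The two obstacles you isolate are exactly the hypotheses the paper adds: the joint balance $(0,0)\in\conv W_i$ is simply assumed (a complete set of contact pairs containing $(0,0)$ in its convex hull), and the cross term $\la V-s_k,\,r_kb_k\ra$ is tamed not by a choice of $s_k$ (which, as you correctly observe, cannot work since $V$ depends on the selection) but by the symmetry condition~\eqref{cond_symm}, which forces $\la u,v_k\ra=\la v,u_k\ra$ and lets the two halves of~\eqref{thm1eq} collapse into a single clean inequality.

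Two technical points deserve attention. First, the weak inequality $\la q-t_k,v_k\ra\geq r_k\,h_B(v_k)$ only excludes $q$ from the interior of $C_k+x_k$, not from the closed set itself; the paper dilates the contact pairs by a factor $1+\eps$ with $(1+\eps)R<1$, which turns the conclusion into a strict inequality while keeping the test point inside $B$. You have the slack $R<1$ available, but you must spend it. Second, your suggestion that central symmetry of $B$ alone restores the joint balance is too optimistic: if $(u,v)$ is a contact pair, then $-u$ is an antipodal boundary point of the homothet of $B$, but $(-u,-v)$ need not be a contact pair with a general $C_i$ unless $C_i$ is itself symmetric about the same centre (as happens for planks, which is why Ball's symmetric plank theorem fits this mould). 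With these caveats, your assessment that the method yields Kadets' theorem and its generalization Corollary~\ref{cor_identical}, but not the full conjecture, is exactly the paper's position.
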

Equation~\eqref{wrk} shows that this is indeed an extension of Conjecture~\ref{conj_affine}, the affine plank problem. Balitskiy~\cite{B21} conjectures an even more general statement.

In addition to the special cases of the affine plank problem discussed earlier, Conjecture~\ref{conj1} has been proved if $B$ is an ellipsoid \cite{O53,B07,K05} or  if the sets $C_i$ form a partition of $B$ in the plane, or an inductive partition in higher dimensions \cite{AK12}. Corollary~\ref{cor1} implies that it also holds in a wide range of cases.

\begin{theorem}\label{thm_symm}
Conjecture~\ref{conj1} holds if for every $i \in [n]$ there exists some $o_i \in \R^d$ such that $r_B(C_i) B-o_i$ and $C_i-o_i$ have a complete set of contact pairs $W_i \subset \R^d \times \R^d$ with $(0,0) \in \conv W_i$, so that for any two such contact pairs $(u_i, v_i) \in W_i$, $(u_j , v_j) \in W_j$ with $i \neq j$,
\begin{equation}\label{cond_symm}
\la u_i, v_j \ra = \la u_j, v_i \ra
\end{equation}
holds.
\end{theorem}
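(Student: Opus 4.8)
The plan is to argue by contradiction, producing a point of $\inter B$ that is missed by every translate $C_k + x_k$. After translating everything I may assume $0 \in \inter B$, and I suppose for contradiction that $\sum_{i=1}^n r_i < 1$, writing $r_i = r_B(C_i)$ and $s = \sum_i r_i$. For each $i$ I fix the data furnished by the hypothesis: the vector $o_i$ and the complete contact set $W_i$ of $r_i B - o_i$ and $C_i - o_i$ with $(0,0) \in \conv W_i$. For a contact pair $(u,v) \in W_i$, the common supporting hyperplane at $u$ with outer normal $v$ gives, on the one hand, $C_i - o_i \subset \{ y : \la y, v \ra \le \la u, v\ra\}$, hence
\[
C_i + x_i \subset \{\, q : \la q - x_i, v \ra \le r_i\, h_B(v) \,\},
\]
and, on the other hand, evaluating the same support value on $r_i B - o_i$,
\[
\la u, v \ra + \la o_i, v \ra = r_i\, h_B(v),
\]
where $h_B$ is the support function of $B$. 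Moreover $u + o_i = r_i b$ for a boundary point $b \in B$, so every selected contact point will contribute a point of $B$.

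I then apply Theorem~\ref{thm_main} to $W_1, \dots, W_n$ with the test vectors
\[
z_k = \bigl(\, 2(x_k + o_k - O),\ 0 \,\bigr), \qquad O = \sum_{i=1}^n o_i,
\]
obtaining contact pairs $w_k = (u_k, v_k) \in W_k$ so that, writing $w = \sum_i w_i = (U, V)$ with $U = \sum_i u_i$, $V = \sum_i v_i$, the inequality $\la w - z_k, \w_k \ra \ge \la w_k, \w_k\ra$ unfolds (using $\w_k = (v_k, u_k)$) to
\[
\la U - 2(x_k + o_k - O),\, v_k\ra + \la V, u_k\ra \ge 2 \la u_k, v_k\ra .
\]
Here the symmetry hypothesis \eqref{cond_symm} is decisive: applied to the selected pairs $(u_i, v_i)$ and $(u_k, v_k)$ for each $i \ne k$ it gives $\la u_i, v_k\ra = \la u_k, v_i\ra$, and summing over $i$ (the diagonal term being common to both sides) yields the identity $\la U, v_k\ra = \la V, u_k\ra$. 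Substituting this collapses the two coordinates and, after dividing by $2$, leaves $\la U, v_k\ra \ge \la u_k, v_k\ra + \la x_k + o_k - O, v_k\ra$. Setting $p = U + O = \sum_i (u_i + o_i) = \sum_i r_i b_i$ and adding $\la O, v_k\ra$ to both sides, the contact identity $\la u_k, v_k\ra + \la o_k, v_k\ra = r_k h_B(v_k)$ turns this into
\[
\la p - x_k, v_k\ra \ge r_k\, h_B(v_k) \qquad (k = 1, \dots, n).
\]

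By the first display this says $p \notin \inter (C_k + x_k)$ for every $k$. On the other hand $p = \sum_i r_i b_i$ with each $b_i \in B$ and $\sum_i r_i = s < 1$, so $p = \sum_i r_i b_i + (1 - s)\cdot 0$ is a convex combination placing $p$ in $\inter B$, since $0 \in \inter B$ carries the positive weight $1-s$. Thus $p \in \inter B$ while avoiding the interior of each $C_k + x_k$, contradicting the covering; the passage from open to the closed covering $B \subset \bigcup_k (C_k + x_k)$ is handled by the slack $1 - s > 0$ in the standard way, i.e.\ by running the argument for slightly shrunken data and letting the scaling tend to $1$ (using the finiteness of the $W_k$), which upgrades each inequality to a strict one.

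I expect the main obstacle to be exactly the bookkeeping that lets \eqref{cond_symm} do its work. Theorem~\ref{thm_main} naturally controls the mixed quantity $\la U, v_k\ra + \la V, u_k\ra$ living in both factors of $\R^d \times \R^d$, whereas the covering is governed by the single support inequality in the normal $v_k$ alone; recognizing that the symmetry condition forces $\la U, v_k\ra = \la V, u_k\ra$, and then calibrating $z_k$ — including the global shift $O = \sum_i o_i$ — so that the surviving inner products reassemble precisely into $r_k\, h_B(v_k)$, is the heart of the matter. The remaining ingredients (the contact-pair support identities and the elementary convexity giving $p \in \inter B$) are routine.
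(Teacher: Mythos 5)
Your argument is correct and is essentially the paper's own proof: argue by contradiction, feed Theorem~\ref{thm_main} the translation data through $z_k$ (with the global shift by $\sum_i o_i$), use the symmetry condition \eqref{cond_symm} to collapse $\la U, v_k\ra = \la V, u_k\ra$, and observe that the selected point $\sum_i (u_i + o_i)$ lies in $\left(\sum_i r_i\right) B \subset \inte B$ while violating every translate's supporting halfspace. The one step you defer --- upgrading $p \notin \inte (C_k + x_k)$ to $p \notin C_k + x_k$ --- is exactly the scaling you sketch, and the paper implements it cleanly in a single pass (no limiting argument) by applying Theorem~\ref{thm_main} to the dilated contact sets $(1+\eps)W_i$ with $(1+\eps)\sum_i r_i < 1$, which turns each resulting inequality into a strict one.
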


We immediately obtain the following generalization of Kadets' theorem~\cite{K05}.

\begin{corollary}\label{cor_identical}
  Conjecture~\ref{conj1} holds if for every $i\in[n]$ there exists some $o_i \in \R^d$ such that $r_B(C_i) B-o_i$ and $C_i-o_i$ have a complete set of contact pairs of the form $(u,u)$.
\end{corollary}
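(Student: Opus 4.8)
The plan is to derive Corollary~\ref{cor_identical} directly from Theorem~\ref{thm_symm} by checking that the diagonal form $(u,u)$ of the contact pairs forces the compatibility condition~\eqref{cond_symm} for free. First I would observe that the hypothesis of the corollary already supplies, for each $i\in[n]$, a translate $o_i$ and a complete set of contact pairs $W_i$ of $r_B(C_i)B - o_i$ and $C_i - o_i$ with $(0,0)\in\conv W_i$; this is verbatim the first requirement of Theorem~\ref{thm_symm}. Hence the entire content of the reduction is to verify that the second requirement, the relation~\eqref{cond_symm}, is automatically satisfied for this subclass.

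For the verification, take any two contact pairs $(u_i, v_i)\in W_i$ and $(u_j, v_j)\in W_j$ with $i\neq j$. By assumption each pair is of the form $(u,u)$, so $v_i = u_i$ and $v_j = u_j$. Substituting these identities into the two sides of~\eqref{cond_symm} gives
\[
\la u_i, v_j \ra = \la u_i, u_j \ra, \qquad \la u_j, v_i \ra = \la u_j, u_i \ra .
\]
Since the Euclidean inner product is symmetric, $\la u_i, u_j \ra = \la u_j, u_i \ra$, and therefore~\eqref{cond_symm} holds for every $i\neq j$. With both hypotheses of Theorem~\ref{thm_symm} now in place, the conclusion $\sum_{i=1}^n r_B(C_i)\geq 1$ follows, which is exactly Conjecture~\ref{conj1} in this case.

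I do not expect a genuine obstacle here: the argument is a one-line specialization, and the only thing worth flagging is the bookkeeping that the diagonal pairs $(u,u)$ indeed qualify as a \emph{complete} set of contact pairs with $(0,0)\in\conv W_i$, i.e. that the completeness condition $0\in\conv\{v : (u,v)\in W_i\}$ coincides with $0\in\conv\{u : (u,u)\in W_i\}$ on the first coordinate. Finally, I would remark that this class recovers Kadets' theorem: for a Euclidean ball the outer normal at a boundary point $u$ is parallel to $u$, so after the translations by $o_i$ the contact pairs are exactly of the form $(u,u)$, and no additional work is required.
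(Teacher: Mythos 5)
Your reduction is exactly what the paper intends: it states the corollary as an immediate consequence of Theorem~\ref{thm_symm}, and the whole content is that $v_i=u_i$, $v_j=u_j$ makes \eqref{cond_symm} read $\la u_i,u_j\ra=\la u_j,u_i\ra$, which holds by symmetry of the inner product. Your bookkeeping remark that completeness $0\in\conv\{u\}$ yields $(0,0)\in\conv\{(u,u)\}$ is also the right (and only) point to check, so the proposal is correct and follows the paper's approach.
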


A particular case is when $o_i \in r_B(C_i)B$, and the contact points between $r_B(C_i) B$ and $C_i$ are the local extrema of the radial function $|x - o_i|$ for $x \in \partial (r_B(C_i) B)$, provided that $0$ is contained in the convex hull of these. Such a situation is illustrated on Figure~\ref{fig1}.

\begin{figure}[h]
  \centering
  \includegraphics[width = 0.5 \textwidth]{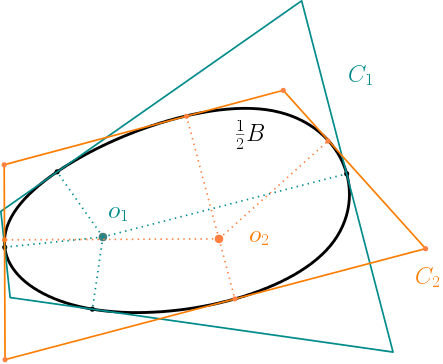}
  \caption{Convex discs with complete sets of contact pairs of the form $(u,u)$}
\label{fig1}
\end{figure}

The direct application of Theorem~\ref{thm_main} yields another sufficient condition.

\begin{proposition}\label{prop2}
Conjecture~\ref{conj1} holds in $\R^{2d}$ if for every $i$ there exists some $o_i \in \R^{2d}$ such that $r_B(C_i) B-x_i$ and $C_i-o_i$ have a complete set of contact pairs of the form $(w,\widehat{w})$.
\end{proposition}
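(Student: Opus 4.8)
The plan is to run the usual reduction from a translative covering to a point‑selection statement, but to feed it straight into Theorem~\ref{thm_main}, using the fact that for a contact pair of the form $(w,\w)$ the common outer normal is \emph{exactly} the vector $\w$ that is paired with the contact point $w$ in \eqref{thm1eq}. Assume, towards a contradiction, that $\sum_{i=1}^n r_i<1$, where $r_i:=r_B(C_i)$. Translating each $C_i$ into its inscribed position (absorbing the shift into $x_i$) and $B$ so that $0\in\inter B$, I may take the maximal homothet to be the origin‑scaled copy $r_iB$ inscribed in $C_i$; these normalizations leave both the covering and the hypothesis of the proposition intact, since outer normals are translation invariant. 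Writing the contact pairs supplied by the hypothesis as $(w_i^{(j)},\w_i^{(j)})$, the assumption $(0,0)\in\conv W_i$ says precisely that the finite sets $W_i=\{w_i^{(j)}\}_j\subset\R^d\times\R^d$ satisfy $(0,0)\in\conv W_i$, so Theorem~\ref{thm_main} applies to them verbatim; no analogue of the cross‑term condition \eqref{cond_symm} has to be verified, as it is built into the involution $w\mapsto\w$.

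Next I encode the covering. Since $\w_i^{(j)}$ is the common outer normal of $r_iB-o_i$ and $C_i-o_i$ at $w_i^{(j)}$, the body $C_i$ lies in the half‑space $\{\,y:\la y-(w_i^{(j)}+o_i),\w_i^{(j)}\ra\le 0\,\}$, so the contact point of $C_i$ itself is $w_i^{(j)}+o_i$. Fix small margins $\eta_k$ with $\la \eta_k,\w_k\ra>0$ and apply Theorem~\ref{thm_main} to the sets $W_i$ with the input vectors $z_k:=x_k-\sum_{i\ne k}o_i+\eta_k$. This yields a selection $w_k\in W_k$ and the point $w=\sum_k w_k$ for which \eqref{thm1eq} holds, i.e. $\la w-w_k,\w_k\ra\ge\la z_k,\w_k\ra$. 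Setting $b:=w+\sum_i o_i=\sum_k(w_k+o_k)$, a short computation using the chosen $z_k$ gives $\la b-(w_k+o_k)-x_k,\w_k\ra\ge\la\eta_k,\w_k\ra>0$ for every $k$, so $b$ lies strictly on the far side of the supporting hyperplane of $C_k+x_k$ coming from the selected contact pair, whence $b\notin C_k+x_k$.

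It remains to check that $b\in B$. Each contact point satisfies $w_k+o_k\in\partial(r_kB)\subset r_kB$, so by the Minkowski identity $b\in\sum_k r_kB=\bigl(\sum_k r_k\bigr)B\subset B$, using $0\in\inter B$ and $\sum_k r_k<1$; in fact $b\in\inter B$. Thus $b$ is a point of $B$ escaping every $C_k+x_k$, contradicting the covering, and therefore $\sum_i r_i\ge 1$, proving Conjecture~\ref{conj1} under the stated hypothesis.

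The step I expect to be the main obstacle is not strictness — that is handled for free by the perturbations $\eta_k$, since Theorem~\ref{thm_main} permits an arbitrary choice of the vectors $z_k$ — but rather the bookkeeping that couples the covering translates $x_k$ with the centering points $o_i$. One must choose $z_k$ so that the single summed point $w=\sum_k w_k$ produced by the theorem simultaneously sits outside every $C_k+x_k$ \emph{and}, after the global shift to $b=w+\sum_i o_i$, lands inside $B$; it is exactly in securing the membership $b\in B$ that the assumption $\sum_i r_i<1$ together with the origin‑centered normalization of the inscribed homothets is spent. Verifying that the chosen $z_k$ reconciles these two demands is the only delicate part of the argument.
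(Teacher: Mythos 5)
Your overall strategy is the right one and is essentially the paper's: Proposition~\ref{prop2} is proved by the same scheme as Theorem~\ref{thm_symm}, namely by feeding the sets $W_i$ directly into Theorem~\ref{thm_main}, and you correctly observe that for contact pairs of the form $(w,\w)$ no analogue of condition \eqref{cond_symm} is needed, since \eqref{thm1eq} already tests the sum point against the normal $\w_k$ attached to the selected contact point $w_k$. The translation bookkeeping, the half-space argument showing $b\notin C_k+x_k$, and the containment $b=\sum_k(w_k+o_k)\in(\sum_k r_k)B\subset B$ are all in order.

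The genuine gap is your strictness mechanism. To conclude $b\notin C_k+x_k$ you need $\la b-(w_k+o_k)-x_k,\w_k\ra>0$; with only $\ge 0$ the point $b$ may lie on the supporting hyperplane and still belong to $C_k+x_k$. You propose to secure strictness by adding margins $\eta_k$ with $\la\eta_k,\w_k\ra>0$ to the inputs $z_k$. This is circular: $z_k$, and hence $\eta_k$, must be fixed \emph{before} Theorem~\ref{thm_main} is invoked, while the selected pair $w_k$ (and so $\w_k$) is produced only by the theorem and depends on $z_k$. Worse, no uniform choice of $\eta_k$ can work: completeness forces $0\in\conv\{\w : w\in W_k\}$, so writing $0=\sum_j\alpha_j\w^{(j)}$ with $\alpha_j\ge 0$ and $\sum_j\alpha_j=1$, one gets $\sum_j\alpha_j\la\eta_k,\w^{(j)}\ra=0$, hence $\la\eta_k,\w^{(j)}\ra\le 0$ for some $j$ with $\alpha_j>0$ --- and the theorem is free to select exactly that pair. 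The paper obtains strictness differently, and this is what you should imitate: choose $\eps>0$ with $(1+\eps)\sum_i r_i<1$ and apply Theorem~\ref{thm_main} to the dilated sets $(1+\eps)W_i$; then \eqref{thm1eq} gives $\la w-z_k,\w_k\ra\ge(1+\eps)\la w_k,\w_k\ra>\la w_k,\w_k\ra$ (using $\la w_k,\w_k\ra>0$, i.e.\ that the origin lies on the correct side of the supporting hyperplane, as in the proof of Theorem~\ref{thm_symm}), while the final point still lands in $B$ because $(1+\eps)\sum_i r_i<1$. With that replacement your argument goes through.
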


Applications of Theorem~\ref{thm_main} to translative coverings are listed in Section~\ref{section_covering}.

\medskip

Although the above results are formulated for finite vector sets/families of convex sets in $\R^d$, they may be extended to an arbitrary number of vectors/convex sets in finite dimensional real or complex Hilbert spaces using the standard techniques.

\medskip

Further developments, historical and mathematical details  related to the plank problem may be found in \cite{A10, B14, FT22+}.

\section{Proof of the main results}

\begin{proof}[Proof of Theorem~\ref{thm_main}]
For each $i$, let $w_i =(u_i, v_i)$ and $z_i = (x_i, y_i)$ with $u_i , v_i, x_i, y_i \in \R^d$.
Select $w_i \in W_i$, $i\in [n]$ so as to maximize
\begin{equation}\label{quant}
\sum_{i \neq j} \la u_i, v_j \ra - \sum_i \la x_i, v_i \ra - \sum_j \la u_j, y_j \ra
\end{equation}
and set $w =  (u,v) = \sum_i w_i $, that is, $u = \sum_i u_i$ and $v = \sum v_i$. We will show that \eqref{thm1eq} holds for every $k$, that is,
\begin{equation}\label{ukvkeq}
\la u - x_k, v_k \ra + \la u_k , v - y_k \ra \geq 2 \la u_k, v_k \ra.
\end{equation}

Let $k\in[n]$ be arbitrary. By the condition of the theorem, there exist non-negative numbers $\alpha(w_k'),\ w_k' \in W_k$ so that $\sum_{w_k' \in W_k}\alpha(w_k') = 1$ and
\[
\sum_{w_k' \in W_k}\alpha(w_k') w_k'= (0,0).
\]
Moreover, since \eqref{quant} is maximal, for each $w_k' = (u_k',v_k') \in W_k$,
\[
0 \geq \sum_{i \neq k} \la u_i, v_k' - v_k \ra + \sum_{j \neq k} \la u_k' - u_k, v_j \ra - \la x_k, v_k' - v_k \ra - \la u_k' - u_k, y_k \ra.
\]
Multiplying the above equation by $\alpha(w_k')$ and summing up for all $w_k' \in W_k$ leads to
\[
0 \geq \sum_{i \neq k} \la u_i, - v_k \ra + \sum_{j \neq k} \la - u_k, v_j \ra - \la x_k,- v_k  \ra - \la - u_k, y_k \ra,
\]
which directly implies \eqref{ukvkeq}.
\end{proof}

\begin{proof}[Proof of Theorem~\ref{thm_symm}]
We may assume that $0 \in B$.
Let $\lambda_i = r_{B}(C_i)$ for every $i$, and $\lambda := \sum \lambda_i$.  Assume on the contrary that $\lambda  < 1$ and $B \subset \bigcup (C_i + x'_i)$ with some $x'_1, \ldots, x'_n \in \R^d$. Choose $\eps>0$ so that $(1 + \eps) \lambda <1$. For each $i$, let $W_i$ be the complete set of contact pairs between $\lambda_i B - o_i$ and $C_i - o_i$ which contains $(0,0)$ in its convex hull. Also, set $ o =(1 + \eps) \sum o_i$ and $x_i = x'_i + o_i - o$ for each $i$.

Apply Theorem~\ref{thm_main} to the sets $(1 + \eps) W_i$ and the corresponding  points $z_k = (2 x_i,0)$. It implies the existence of $w_i = (u_i, v_i)\in W_i$, $i \in [n]$ so that setting $w=(u,v)= \sum (1 + \eps) w_i$,
\[
\la u - 2 x_k , (1 + \eps) v_k \ra + \la v, (1 + \eps) u_k \ra \geq 2 (1 + \eps)^2 \la u_k, v_k \ra
\]
holds for each $k$. Since \eqref{cond_symm} implies that $\la u, v_k \ra = \la v, u_k \ra$, the above equation simplifies to
\[
\la u - x_k, v_k \ra \geq (1 + \eps) \la u_k, v_k \ra >\la u_k, v_k \ra.
\]
 Since $u_k$ is a boundary point of $C_k - o_k$ with outer normal $v_k$, the convexity of $C_k$ implies that $u - x_k \not \in C_k - o_k$, equivalently, $u+o \not \in C_k + x_k'$ for any $k$. On the other hand, $u_i \in \lambda_i B - o_i$ for every $i$. Therefore,
\[
u \in \sum_i (1 + \eps)  \lambda_i B - \sum_i (1 + \eps) o_i = (1 + \eps)\lambda B - o.
\]
Since $B$ is  convex and $0 \in B$, $(1 + \eps)\lambda B \subset B$. Hence, $u + o \in B$, but it is not covered by any of the sets $C_k + x_k'$, which is a contradiction.
\end{proof}

The proof of Proposition~\ref{prop2} is nearly identical, thus we leave it to the dedicated reader.

%


\section{Applications to translative coverings}\label{section_covering}

Corollary~\ref{cor1} readily implies the next statement regarding translative coverings.

\begin{proposition}\label{prop_covering}
  Let $\K = \{ K_1, \ldots, K_n \}$ be a family of convex bodies in $\R^d$ containing the origin in their interior. For each $i$, let $V_i \subset S^{d-1}$ be a set of direction vectors for which $0 \in \conv V_i$.  Denote by $U_i$ the set of projection vectors of $0$ onto the supporting hyperplanes of $K_i$ corresponding to members of $V_i$. Then $\inte \K =\{\inte K_1, \ldots, \inte K_n \} $ does not permit a translative covering of $U_1 + \ldots + U_n$.
\end{proposition}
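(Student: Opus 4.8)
The plan is to apply Corollary~\ref{cor1} directly, identifying the right vector sets and target points so that the conclusion of the corollary certifies non-coverability. For each $i$, let $u_i^{(v)}$ denote the projection of $0$ onto the supporting hyperplane of $K_i$ with outer normal $v \in V_i$; thus $U_i = \{ u_i^{(v)} : v \in V_i \}$. The essential geometric observation is that $u_i^{(v)}$ is a scalar multiple of $v$ — indeed $u_i^{(v)} = h_{K_i}(v)\, v$ where $h_{K_i}$ is the support function — so the vectors in $U_i$ point in the same directions as those in $V_i$. In particular, since $0 \in \conv V_i$ and each $u_i^{(v)}$ is a positive multiple of the corresponding $v$ (here we use that $0$ lies in the interior of $K_i$, so $h_{K_i}(v) > 0$), one checks that $0 \in \conv U_i$ as well, so the hypothesis of Corollary~\ref{cor1} is met.

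Next I would set up the contradiction. Suppose for contradiction that $U_1 + \cdots + U_n$ does admit a translative covering by $\K$, meaning $U_1 + \cdots + U_n \subset \bigcup_{i=1}^n (K_i + x_i)$ for some translation vectors $x_1, \ldots, x_n \in \R^d$. Apply Corollary~\ref{cor1} to the sets $U_i$ and the points $x_i$: this produces a selection $u_i \in U_i$ such that, writing $u = \sum_i u_i$, the inequality $\la u - x_k, u_k \ra \geq |u_k|^2$ holds for every $k$. By construction $u$ is a point of $U_1 + \cdots + U_n$, so if a covering existed, $u$ would lie in some $K_k + x_k$, i.e.\ $u - x_k \in K_k$.

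The crux is to show that the inequality furnished by the corollary rules out $u - x_k \in K_k$ for the index $k$ that realizes $u_k = u_k^{(v)}$. The key is that $u_k = h_{K_k}(v) v$ is exactly the projection of $0$ onto the supporting hyperplane $H = \{ z : \la z, v \ra = h_{K_k}(v) \}$ of $K_k$ with outer normal $v$. Hence $K_k$ lies in the halfspace $\{ z : \la z, v \ra \leq h_{K_k}(v) \}$, and $\la u_k, v \ra = h_{K_k}(v) |v|^2 = |u_k|^2 / h_{K_k}(v) \cdot h_{K_k}(v)$; more simply, since $u_k = h_{K_k}(v) v$ one has $\la u_k, u_k \ra = h_{K_k}(v) \la u_k, v \ra$, so the support inequality for $K_k$ reads $\la z, u_k \ra \leq |u_k|^2$ for all $z \in K_k$. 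The corollary gives $\la u - x_k, u_k \ra \geq |u_k|^2$, which is incompatible with $u - x_k \in K_k$ unless equality holds throughout, placing $u - x_k$ on the boundary hyperplane. Thus $u$ cannot be covered, contradicting the assumed covering, and the proposition follows.

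The step I expect to require the most care is the final incompatibility argument, specifically matching the normalization in Corollary~\ref{cor1} (where the right-hand side is $|u_k|^2$ rather than a unit-normalized quantity) against the support-function inequality $\la z, u_k \ra \leq |u_k|^2$ for $z \in K_k$. One must verify that the projection $u_k^{(v)}$ satisfies $\sup_{z \in K_k} \la z, u_k^{(v)} \ra = |u_k^{(v)}|^2$ exactly, which is where the identity $u_k^{(v)} = h_{K_k}(v) v$ and the definition of the support hyperplane are used; a boundary (equality) case must be handled, but this only forces $u - x_k$ onto the supporting hyperplane and still excludes it from the interior covering, so it does not obstruct the conclusion.
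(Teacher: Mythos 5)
Your overall route is exactly the one the paper intends: the paper offers no proof of Proposition~\ref{prop_covering} beyond the remark that Corollary~\ref{cor1} ``readily implies'' it, and your fleshing-out of that implication --- the identity $u_i^{(v)} = h_{K_i}(v)\, v$ with $h_{K_i}(v) > 0$, the deduction that $0 \in \conv U_i$, the application of Corollary~\ref{cor1} to the sets $U_i$ and the translation vectors $x_i$, and the support inequality $\sup_{z \in K_k} \la z, u_k \ra = |u_k|^2$ --- is the correct and intended chain of reasoning.

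There is, however, one genuine gap, located precisely at the step you yourself flag as delicate. Corollary~\ref{cor1} gives only the non-strict inequality $\la u - x_k, u_k \ra \geq |u_k|^2$, and the support inequality for the closed body $K_k$ is also non-strict; together they only rule out $u - x_k \in K_k \setminus H$, where $H$ is the supporting hyperplane with normal $v$. The possibility $u - x_k \in K_k \cap H$ remains, and in that case $u \in K_k + x_k$, so $u$ \emph{is} covered --- the covering in the proposition is by the closed translates. Your closing claim that the equality case ``still excludes it from the interior covering, so it does not obstruct the conclusion'' conflates covering with covering by interiors; the proposition asserts that no closed translative covering exists. The boundary case is not vacuous: already $n=1$, $K_1 = B^d$, $V_1 = \{e_1, -e_1\}$ gives $U_1 = \{e_1, -e_1\} \subset B^d$, which is covered by $K_1$ itself. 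So the argument as written establishes only that some point of $U_1 + \cdots + U_n$ avoids $\bigcup_i \inter (K_i + x_i)$. To reach the stated conclusion one needs an additional perturbation in the spirit of the paper's proof of Theorem~\ref{thm_symm}, where an $\eps$-dilation of the contact-pair sets turns the inequality into a strict one; here that requires care, since dilating the $U_i$ changes the sumset whose non-coverability is being asserted. (This caveat applies equally to the paper's own one-line derivation, so your proof is faithful to the source; but the equality case deserves an honest resolution rather than the dismissal given.)
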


A particular case is when all the $K_i$'s are homothets of a fixed convex body $K \in \K^d$. Such homothetic coverings have been studied extensively, see e.g. \cite{N18}, Section 3.2 of \cite{BMP05} and Section 15.4 of \cite{FT22+}. A related conjecture is due to Soltan~\cite{S90}:

\begin{conjecture}[V. Soltan]\label{conj_soltan}
  Assume that $K \in \K^d$ and that $\lambda_1 K, \ldots, \lambda_n K$ permit a translative covering of $K$ with $\lambda_i \in (0,1)$ for every $i$. Then
  \[
  \sum_{i=1}^n \lambda_i \geq d.
  \]
\end{conjecture}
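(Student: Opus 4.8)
The plan is to separate the two very different kinds of information hidden in the statement: the \emph{isotropic} constraint coming from the $K$-inradius, which already yields the weak bound $\sum_i \lambda_i \ge 1$, and a genuinely $d$-dimensional \emph{corner} constraint, which is responsible for the extra factor $d$. Since homothety ratios and the covering relation are preserved by invertible affine maps, I would first normalise $K$ to a convenient position (John position, or centroid at the origin), and record that projecting the covering $K \subset \bigcup_i(\lambda_i K + x_i)$ orthogonally onto a line in direction $u$ sends each $\lambda_i K$ to a segment of length $\lambda_i w_u(K)$, and these shadows must cover a segment of length $w_u(K)$; hence $\sum_i \lambda_i \ge 1$. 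The same bound is all that Corollary~\ref{cor1} delivers for homothets, and it is the best that any single direction can give.

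The key difficulty is that this one-dimensional argument is hopelessly lossy: because a homothet scales every direction by the same factor $\lambda_i$, repeating the shadow bound along $d$ independent directions and adding merely reproduces $\sum_i \lambda_i \ge 1$, never $\ge d$. The factor $d$ must therefore come from a single $d$-dimensional incidence phenomenon. The model is the simplex $T = \{x \ge 0 : \sum_j x_j \le 1\}$: a positive homothet covering the vertex $e_k$ is forced to be $\lambda_k T + (1-\lambda_k)e_k$, which covers exactly $\{x \in T : x_k \ge 1 - \lambda_k\}$, while the homothet covering the origin covers $\{x \in T : \sum_j x_j \le \lambda_0\}$; demanding that the single ``deep corner'' point $(1-\lambda_1, \dots, 1-\lambda_d)$ be covered yields $d - \sum_{k\ge1}\lambda_k \le \lambda_0$, that is $\sum_{k=0}^d \lambda_k \ge d$, with equality at $\lambda_k \equiv d/(d+1)$. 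Thus for the simplex the conjecture is elementary, and it pins down both the value $d$ and the extremal configuration.

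For a general body the plan is to manufacture the analogue of that one deep corner. Concretely, I would apply Theorem~\ref{thm_main}, or Proposition~\ref{prop_covering}, not to a single separating functional but to a \emph{complete} family of $d+1$ contact directions $v_0, \dots, v_d$ with $0 \in \conv\{v_0, \dots, v_d\}$, chosen so that the corresponding support hyperplanes of $K$ bound a simplex circumscribed about $K$. Each homothet covering the contact region attached to $v_k$ then obeys a half-space constraint $\langle x_i, v_k\rangle \ge (1-\lambda_i)h_K(v_k)$, and the hope is that the selection produced by Theorem~\ref{thm_main} forces a point lying in all $d+1$ complementary half-spaces at once, giving $\sum_i \lambda_i \ge d$ exactly as above. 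Equivalently, one could attempt an extremal reduction: the infimum of $\sum_i \lambda_i$ over all $K \in \K^d$ and all admissible coverings is attained by compactness, and one would argue that a minimiser must be a simplex, after which the previous paragraph finishes the argument.

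The step I expect to be the genuine obstacle---and the reason the conjecture is still open---is precisely this passage from the isotropic bound $1$ to the corner bound $d$ for bodies \emph{without} vertices. For a smooth strictly convex $K$ there is no single boundary point covered by a bounded family of homothets whose outer normals positively span $\R^d$, so the clean simplex computation has no direct counterpart: the $d+1$ contact directions fed into Theorem~\ref{thm_main} generically sit at $d+1$ \emph{different}, well-separated boundary points, and controlling the interaction of the homothets covering those points is exactly where the selection argument stalls. The alternative extremal route runs into the usual difficulty of ``simplex is extremal'' statements, since $\sum_i \lambda_i$ is not visibly monotone under the natural deformations of $K$ and symmetrisation does not respect the homothetic covering structure. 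I would therefore regard a successful proof as hinging on a new $d$-dimensional incidence inequality interpolating between the shadow bound and the simplex corner bound, rather than on any direct specialisation of the lemmas proved above.
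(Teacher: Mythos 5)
The statement you were asked to prove is Conjecture~\ref{conj_soltan} in the paper: it is attributed to V.~Soltan as a personal communication, the paper offers no proof of it, and it remains open except for the special cases the author cites ($d=2$ or $n=d+1$ by Soltan and V\'as\'arhelyi, $K=B^d$ by Glazyrin, and Nasz\'odi's asymptotic bound). Your proposal is candid that it does not close the argument, and your diagnosis of where the difficulty lies is essentially correct: the one-dimensional shadow bound (and, as you note, Corollary~\ref{cor1} applied to homothets) only yields $\sum_i \lambda_i \ge 1$, and no known mechanism upgrades this to $d$ for bodies without vertices. So there is no discrepancy with the paper to report --- there is simply no proof on either side. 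The closest the paper comes is its final theorem, which establishes the bound $\sum_i \lambda_i \ge d$ for covering a simplex by \emph{negative} homothets via Proposition~\ref{prop_covering}, exploiting exactly the fact that the $d+1$ facet normals of $T^d$ form a complete set and that $\conv U = -\frac{1}{d}T^d$; that is the rigorous incarnation of the ``corner'' phenomenon you describe, but it does not transfer to positive homothets.

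Two small corrections to the heuristic part of your write-up. First, in a translative covering the homothet containing the vertex $e_k$ of $T$ is \emph{not} forced to be $\lambda_k T + (1-\lambda_k)e_k$; it only needs to contain $e_k$ and may protrude outside $T$, so the identification of the covered region with $\{x \in T : x_k \ge 1-\lambda_k\}$ requires an additional (true but not immediate) step, and in any case the deep-corner computation only handles the situation where one designated homothet is assigned to each vertex --- essentially the $n=d+1$ case already settled by Soltan and V\'as\'arhelyi. Second, the extremal example $\lambda_i \equiv d/(d+1)$ with $K = T^d$ is stated in the paper only to show that the conjectured bound cannot be improved, not as evidence that the simplex is the minimiser; your proposed compactness-plus-``simplex is extremal'' route would itself need the monotonicity you correctly flag as unavailable. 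In short: the statement is a conjecture, you have not proved it, the paper has not proved it, and your account of the obstruction is accurate.
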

Let $T^d$ denote the $d$-dimensional regular simplex. Setting $K=T^d$, $n= d+1$ and $\lambda_i = \frac {d}{d+1}$ shows that the above bound may not be improved.

Conjecture~\ref{conj_soltan} was proved for $d=2$ or $n=d+1$ by Soltan and Vásárhelyi~\cite{SV93} and for $K= B^d$ by Glazyrin~\cite{G19}, while Naszódi~\cite{N10} showed that $\sum \lambda_i > \alpha \, d$ for any fixed $\alpha<1$ if $d$ is sufficiently large.

Vásárhelyi~\cite{V84} showed that given a triangle $T$ in the plane of area 1, any family of its negative homothets whose areas sum up to at least 4 permits a translative covering of $T$. We prove a reverse statement resembling Conjecture~\ref{conj_soltan} which holds in arbitrary dimensions.

\begin{theorem}\label{thm_simplex}
Assume that $T\subset \R^d$ is a non-degenerate simplex, and $\lambda_1, \ldots, \lambda_n \geq 0$ are so that the family $-\lambda_1 T, \ldots, -\lambda_n T$ permits a translative covering of $T$. Then
\begin{equation}\label{eq_simplex}
  \sum_{i=1}^n \lambda_i \geq d.
\end{equation}
\end{theorem}

\begin{proof}
We may assume that $T = T^d$ with its centre at  $0$. Let $V$ be the set of normal directions of the facets of $T^d$ and $U$ be the set of projection vectors of $0$ onto the facets. It is well-known that $\conv U = - \frac 1 d T^d$. Let $\eps>0$ be arbitrary. Applying Proposition~\ref{prop_covering} with $K_i = - (1+ \eps) \lambda_i T^d$, $V_i = V$ and $U_i = - (1+ \eps)\lambda_i U$  yields an uncovered point in
\[
U_1 + \ldots + U_n \subset (1 + \eps) \frac {\lambda_1 + \ldots + \lambda_n}{d} \, T^d,
\]
which shows that  \eqref{eq_simplex} must hold true.
\end{proof}

Based on Theorem~\ref{thm_simplex}, G. Fejes Tóth~\cite{FT22+} formulated the following extension of Soltan's conjecture to coefficients of arbirary sign.
\begin{conjecture}[G. Fejes Tóth]\label{conj_soltan}
  Assume that $K \in \K^d$ and that $\lambda_1 K, \ldots, \lambda_n K$ permit a translative covering of $K$ with $\lambda_i \in (-1,1)$ for every $i$. Then
  \[
  \sum_{i=1}^n |\lambda_i| \geq d.
  \]
\end{conjecture}

\section{Acknowledgements}

I am grateful to G. Fejes Tóth, O. Ortega Moreno, and J. Pach  for the illuminating conversations on the subject.

\medskip

I would like to dedicate this piece of work to the loving memory of my father, Imre Ambrus (1953-2021.)

\bigskip

\vspace{1 cm}
\noindent
{\sc Gergely Ambrus}
\smallskip

\noindent
{\em Alfréd Rényi Institute of Mathematics, Eötvös Loránd Research Network, Budapest, Hungary, and Bolyai Institute, University of Szeged, Hungary}

\noindent
e-mail address: \texttt{ambrus@renyi.hu}

\end{document}